\documentclass{article}
\usepackage[utf8]{inputenc}
\usepackage[T1]{fontenc}
\usepackage{lmodern}
\usepackage{graphicx}

\textwidth=125mm
\textheight=195mm

\usepackage{amsmath,amssymb,amsthm}
\setcounter{tocdepth}{2}

\newtheorem{theoreme}{Theorem}[section]
\newtheorem{definition}[theoreme]{Definition}
\newtheorem{remarque}[theoreme]{Remark}
\newtheorem{proposition}[theoreme]{Proposition}
\newtheorem{lemme}[theoreme]{Lemma}
\newtheorem{corollary}[theoreme]{Corollary}

\newcommand{\cp}{\mathbb{C}\mathbb{P}}

\newcommand{\cmplx}{\mathbb{C}}
\newcommand{\real}{\mathbb{R}}

\newcommand{\ellipse}{\mathcal{E}}

\newcommand{\bigaxe}{\mathcal{F}}

\newcommand{\ccenters}{\mathcal{C}} 
\newcommand{\inftyline}{\overline{\cmplx}_{\infty}}

\begin{document}

\title{On the circumcenters of triangular orbits in elliptic billiard}
\author{}
\date{}
\maketitle

\noindent Corentin FIEROBE, École Normale Supérieure de Lyon, Unité de Mathématiques Pures et Appliquées, UMR CNRS 5669, 46, allée d’Italie, 69364 Lyon Cedex 07, France

\begin{abstract}
On an elliptic billard, we study the set of the circumcenters of all triangular orbits and we show that this is an ellipse. This article follows \cite{romaskevich2014}, which proves the same result with the incenters, and \cite{glut1}, which among others introduces the theory of complex reflection in the complex projective plane. The result we present was found at the same time by Ronaldo Garcia in an article to appear in American Mathematical Monthly (no preprint available). His proof uses completely different methods of real differential calculus.
\end{abstract}

\tableofcontents

\section{Overview of the problem}

The famous great theorem of Poncelet, \textit{cf} \cite{berger90} and \cite{poncelet}, asserts that \textit{if there exists an $n$-sided polygon inscribed in a conic $\mathcal{C}$ and circumscribed about an other one $\mathcal{D}$, then there are infinitely many such polygons, and you can find such one for each point of $\mathcal{C}$ chosen to be one of its vertices}. A classical proof of it can be found in \cite{berger90}. \cite{poncGH} and \cite{schwartz} give a way to prove it using complex methods.

It has a lot of consequences (see \cite{berger90}, \cite{DragRad}), especially in billard theory, since it gives a condition to the existence of particular $n$-periodical orbits in conics. In particular, given an ellipse $\ellipse$, one can find a confocal ellipse $\gamma$ to $\ellipse$, such that each triangular orbit on $\ellipse$ is circumscribed about it; and conversely one can complete each tangent line to $\gamma$ in a triangular orbit of $\ellipse$.

We study here the set of circumcenters (the centers of the circumscribed circles) of all triangular orbits in such an elliptic billard $\ellipse$. We want to prove the following:

\begin{theoreme}
	\label{thethm}
	The set $\ccenters$ of circumcenters of all triangular orbits on an ellipse is also an ellipse.
\end{theoreme}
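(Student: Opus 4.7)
The plan is to parametrize the Poncelet family of triangular orbits by a single complex variable, exhibit $\ccenters$ as (the real trace of) the image of a rational map into $\cp^2$, and show by a degree count that this image is a conic. Following \cite{glut1}, I would first complexify: view $\ellipse$ as the real locus of a smooth conic $\cellipse\subset\cp^2$ and extend the billiard reflection to the holomorphic complex reflection law on $\cellipse$. By the complexified Poncelet theorem, the period-three dynamics assigns to each $P\in\cellipse$ two other points $Q(P),R(P)\in\cellipse$ depending rationally on $P$, so that $(P,Q(P),R(P))$ forms a holomorphic family of triangular orbits.

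To translate the circumcenter into projective terms I would use the standard fact that a smooth conic in $\cp^2$ is a Euclidean circle exactly when it contains the two cyclic points at infinity $I=(1:i:0)$ and $J=(1:-i:0)$, in which case its Euclidean center is the pole of the line at infinity with respect to the conic. Consequently, the circumcircle of the triangle $(P,Q,R)$ is the unique conic through the five points $P,Q,R,I,J$ and its center is a rational function of $P,Q,R$. Composing with $P\mapsto(P,Q(P),R(P))$ gives a rational map $\phi:\cellipse\to\cp^2$ whose image is an algebraic curve containing $\ccenters$.

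The heart of the argument is to show that $\mathrm{Im}(\phi)$ has degree two. Since $\cellipse\cong\cp^1$, the image of $\phi$ is an irreducible curve of degree $\deg(\phi)/d$, where $d$ is the cardinality of a generic fiber. The three vertices of any given triangular orbit are sent to the same circumcenter, so $d=3$, reducing the claim to $\deg\phi=6$. This degree computation is where I expect the main obstacle: it can presumably be carried out explicitly after choosing a rational parametrization of $\cellipse$ in which the order-three Poncelet map is simply multiplication by a cube root of unity, or, more geometrically, by counting preimages of a generic line in $\cp^2$ through studying the behavior of $\phi$ near the cyclic points $I,J$ and near the fixed points of the Poncelet dynamics (which correspond to degenerate or isosceles orbits in the real picture).

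Once the image is a conic defined over $\real$, its real points form a real conic on which $\ccenters$ lies. Since every triangle inscribed in the compact set $\ellipse$ has its circumcenter in a bounded region of the plane, $\ccenters$ is a bounded infinite subset of $\real^2$, which rules out hyperbolas and parabolas and forces the real conic to be an ellipse, proving Theorem \ref{thethm}.
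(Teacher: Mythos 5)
Your overall strategy --- complexify, realize the circumcenter as a rational function of the orbit, and show the image curve is a conic whose bounded real trace must be an ellipse --- is sound in outline, and the final boundedness argument matches the paper's. But the step you yourself flag as ``where I expect the main obstacle'' is precisely the content of the theorem, and it is left undone. You assert $\deg\phi=6$ and a generic fiber of cardinality $d=3$ without proof. Neither is routine: the fiber count $d=3$ amounts to showing that two \emph{distinct} triangular orbits generically have distinct circumcenters (if $k$ orbits shared a generic circumcenter you would get $d=3k$), and the degree count $\deg\phi=6$ requires controlling the behaviour of $\phi$ at the degenerate orbits (those with an isotropic tangent side) and at the orbits with a vertex at infinity, where the circumcircle degenerates into a pair of lines and the ``center'' must be interpreted as a limit. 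There is also a smaller imprecision: $Q(P)$ and $R(P)$ are not individually rational in $P$ (they are the two branches of a quadratic), only symmetric functions of the pair are; this is harmless for the circumcenter but should be said.

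For comparison, the paper avoids any global degree computation. It proves $\hat{\ccenters}$ is algebraic via Remmert and Chow, and then computes its intersection with one well-chosen line, the foci line $\bigaxe$: Propositions \ref{prop_circum_degen} and \ref{prop_circum_finite} show that no degenerate or infinite orbit has its circumcenter on $\bigaxe$, Corollary \ref{cor_intersection} shows exactly two orbits (up to vertex permutation) have circumcenter on $\bigaxe$, and the symmetry argument $r(t)=r(-t)$ in the proof of Theorem \ref{lemme_intersection} shows each of these two intersection points is a smooth point of $\hat{\ccenters}$ met transversally. B\'ezout then gives degree two. Your route would work if the local analysis near the degenerate and infinite orbits were carried out to pin down $\deg\phi$, but that analysis is of essentially the same difficulty as the paper's Propositions \ref{prop_circum_degen} and \ref{prop_circum_finite}, so as written the proposal defers rather than resolves the core difficulty.
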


\begin{remarque}
\label{rmk_circle}
Theorem \ref{thethm} is obvious in the particular case where the ellipse is a circle, because then the set of circumcenters is reduced to a single point. Thus, from now on \textit{we will consider that the ellipse is not a circle}.
\end{remarque}

There are many other results similar to theorem \ref{thethm}. Dan Reznik discovered experimentally the same result for the incenters of triangular orbits (see the video \cite{reznik_youtube} and the github page \cite{reznik_github} written with Jair Koiller) and Romaskevitch (see \cite{romaskevich2014}, whose proof widely inspired ours) confirmed these observations by proving them. Tabachnikov and Schwartz, in \cite{taba}, proved that the loci of the centers of mass (and of an other particular point) of a $1$-parameter family of Poncelet $n$-gons in an ellipse is an ellipse homothetic to the previous one. They also mention that a similar result was proved by Zaslawski, Kosov and Muzafarov for the orthocenters (\cite{zaslawski}, reference from \cite{taba}). And Garcia (see \cite{ronaldo}) uses explicit calculations to prove that the loci of incenters and orthocenters of triangular orbits are ellipses, and describes them precisely.\\

Before going into details, we give here a brief summary of the proof, which is inspired by \cite{romaskevich2014} (we use the same complex methods, see further). We consider a projective complexified version of $\ccenters$, denoted by $\hat{\ccenters}$, which turns out to be an algebraic curve (consequence of Remmert proper mapping theorem, see \cite{GH} p. 34). Then we show that the intersection of the complex curve $\hat{\ccenters}$ with the foci line of the ellipse is reduced to two points, each one of them corresponding to a single triangular orbit. Further algebraic arguments on the intersection type of $\hat{\ccenters}$ with the foci line of $\ellipse$ allow to conclude that it is a conic, using Bezout theorem. It's then easy to check that $\hat{\ccenters}$ is an ellipse since its real part is bounded.

As explained, one considers the projective complex Zariski closure of the ellipse $\ellipse$ and a complexified version of $\ccenters$, $\hat{\ccenters}$. In order to define $\hat{\ccenters}$ and to prove the first statement concerning the intersection with the foci line, we study an extension of the reflection law and of the triangular orbits to complex domain, as in \cite{romaskevich2014} (which proves Proposition \ref{lemma_olga} used in this article). Complex reflection law and complex planar billiards were introduced and studied by A. Glutsyuk in \cite{glut1} and \cite{glut2}. See also \cite{glut3} where they were applied to solve the two-dimensional Tabachnikov's Commuting Billiard conjecture and a particular case of two-dimensional Plakhov's Invisibility conjecture with four reflections.

\textbf{Section \ref{sec_general_reflexion}} is devoted to the complex reflexion law and to complex orbits in a complexified ellipse: in \textbf{Subsection \ref{sec_reflection}}, we introduce the complex reflexion law; \textbf{Subsection \ref{sec_cmplxconic}} recalls some results about complexified conics; we further define what is a triangular complex orbit in \textbf{Subsection \ref{sec_orbits}}; then, in \textbf{Section \ref{sec_circumcircles}} we introduce the definition and we study properties of complex circumscribed circles to such orbits: Proposition \ref{prop_orbit_form} is the main result of this section. Finally, \textbf{Section \ref{sec_proof}} is devoted to the proof of Theorem \ref{thethm}, using previous results.

\section{Complex triangular obits on an ellipse}
\label{sec_general_reflexion}
\subsection{Complex reflection law}
\label{sec_reflection}

Considering an affine chart whose coordinates will be denoted by $(x,y)$, we have the inclusion $\real^2 \subset \cmplx^2 \subset \cp^2$, and $\cp^2 = \cmplx^2 \sqcup \inftyline$, where $\inftyline$ is the infinity line. As introduced and explained in \cite{glut1}, and studied in \cite{romaskevich2014}, the reflection law in $\real^2$ can be extended to $\cp^2$ by considering the complexified version of the canonical euclidean quadratic form 
$$dx^2+dy^2$$
which is a non degenerate quadratic form on $\cmplx^2$. In a similar way to the euclidean case, it leads to construct a notion of symmetry in $\cmplx^2$. But because it has two isotropic subspaces of dimension $1$ (namely $\cmplx (1,i)$ and $\cmplx (1,-i)$), the notion of orthogonal space, and hence of symmetrical line, is not always defined. This is the reason why one needs to adapt the notion of symmetry. A similar notion of symmetry with isotropic spaces is studied in \cite{khesin} and \cite{dragovic} in pseudo-Euclidean and pseudo-Riemannian cases.

\begin{definition}[\cite{glut1}, definition 1.2]
	A line in $\cp^2$ is said to be \textit{isotropic} if it contains either $I = [1:i:0]$ or $J = [1:-i:0]$, the isotropic points at infinity (or cyclic points), and \textit{non-isotropic} if not. (Thus, the infinity line is automatically isotropic.)
\end{definition}

\begin{definition}[\cite{glut1}, definition 2.1]
The \textit{symmetry} with respect to a line $L$ is defined by the two following points:
	\begin{itemize}
		\item the \textit{symmetry acting on $\cmplx^2$}: it is the unique non-trivial complex-isometric involution fixing the points of the line $L$, if $L$ is non-isotropic ;
		\item the \textit{symmetry acting on lines}: if $L$ is an isotropic line going through a finite point $x$, two lines $l$ and $l'$ going through $x$ are called symmetric if there are sequences $(L_n)_n$, $(l_n)_n$, $(l'_n)_n$ of lines through points $x_n$ so that $L_n$ is non-isotropic, $l_n$ and $l'_n$ are symmetric with respect to $L_n$, $l_n \to l$, $l'_n \to l'$, $L_n \to L$ and $x_n \to x$.
	\end{itemize}
\end{definition}

We recall now lemma $2.3$ \cite{glut1} which gives an idea of this notion of symmetry in the case of an isotropic line through a finite point.

\begin{lemme}[\cite{glut1}, lemma 2.3]
\label{lemma_glutsyuk}
If $L$ is an isotropic line through a finite point $x$ and $l$, $l'$ are two lines going through $x$, then $l$ and $l'$ are symmetric with respect to $L$ if and only if either $l=L$, or $l'=L$. 
\end{lemme}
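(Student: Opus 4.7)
My plan is to translate the statement into an explicit calculation with slopes at the pivot, and then analyze the degeneration of the reflection map at the isotropic limit. First I would translate $x$ to the origin and, swapping the two cyclic points if necessary, assume $L$ passes through $I=[1:i:0]$, so $L$ has slope $i$. Since reflection across a non-isotropic $L_n$ preserves the pivot and the reflected line depends only on slopes, the formula $\tan(2\alpha-\beta)$ combined with $\tan 2\alpha=2s/(1-s^2)$ yields
$$R_s(t)=\frac{ts^2-t+2s}{1-s^2+2st}$$
for the slope of the reflection of a line of slope $t$ across a line of slope $s$. Both numerator and denominator vanish at $(s,t)=(i,i)$, so $R$ develops an indeterminacy precisely at the isotropic direction; this indeterminacy will be the geometric heart of the proof.

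For the ``only if'' direction, suppose one has sequences $s_n\to i$, $t_n\to t$, $R_{s_n}(t_n)\to t'$. If $t\neq i$, the denominator $1-s_n^2+2s_nt_n$ tends to $2(1+it)\neq 0$, so I may pass to the limit directly and obtain
$$t'=\frac{i-t}{1+it}=i,$$
using the identity $i(1+it)=i-t$. Hence $l'=L$; swapping the roles of $l$ and $l'$ gives the analogous conclusion, so at least one of $l,l'$ must equal $L$.

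For the ``if'' direction I would prove the stronger statement that when $l=L$, any $l'$ through $x$ can be realised as a symmetric partner. Taking $x_n=x$ and writing $s_n=i+a_n$, $t_n=i+b_n$ with $a_n\to 0$, a Taylor expansion at $(s,t)=(i,i)$ gives
$$R_{s_n}(t_n)=\frac{-2b_n+ia_n^2+\cdots}{-a_n^2+2ib_n+\cdots}.$$
Setting $b_n=c\,a_n^2$ sends the ratio to $(i-2c)/(-1+2ic)$, and solving for the target slope $\tau$ of $l'$ produces $c=(i+\tau)/(2(1+i\tau))$, which is well defined unless $\tau=i$—in which case $l'=L$ and the constant sequence $l'_n=l_n=L_n$ works trivially. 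The hard part will be exactly this delicate degeneracy: away from $(i,i)$ the reflection map is rigid enough to force the partner onto $L$, yet the $0/0$ indeterminacy at $(i,i)$ provides just enough freedom, through the second-order relation between $b_n$ and $a_n$, to reach any prescribed limit. The case where $L$ passes through $J$ instead of $I$ follows by complex conjugation.
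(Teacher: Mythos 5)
This lemma is quoted from \cite{glut1} (Lemma 2.3) and the paper gives no proof of it, so there is nothing internal to compare against; your argument has to stand on its own, and it does. Your reflection formula $R_s(t)=\bigl(t s^2-t+2s\bigr)/\bigl(1-s^2+2st\bigr)$ is the correct action of the complex-isometric involution on slopes (translation to $x_n$ is harmless since only directions matter), and the two halves are sound: for the ``only if'' part, when $t\neq i$ the point $(i,t)$ is not an indeterminacy point of $R$, the denominator tends to $2(1+it)\neq 0$, and the limit is identically $i$, forcing $l'=L$; for the ``if'' part, your expansion is exact ($N=-2b+ia^2+2iab+a^2b$, $D=-a^2+2ib+2ab$), so $b_n=c\,a_n^2$ does give the limit $(i-2c)/(-1+2ic)$, and since $c\mapsto(i-2c)/(-1+2ic)$ is a M\"obius map you reach every target slope $\tau$. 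Two small loose ends you should tie up: (i) vertical lines (slope $\infty$) are not covered by your formulas as written — the clean fix is to regard $R$ as a rational map on $\cp^1\times\cp^1$ of slopes, whose only indeterminacy points are $(\pm i,\pm i)$, after which both directions go through verbatim (in particular $\tau=\infty$ corresponds to $c=-i/2$, where the $a^2$-coefficient of $D$ vanishes but that of $N$ does not); (ii) in the ``only if'' direction you should say explicitly that the symmetry relation is symmetric in $l,l'$ because each reflection is an involution, which is what licenses ``swapping the roles''. This is essentially the same explicit-coordinate degeneration analysis as Glutsyuk's original proof, just carried out in affine slope coordinates rather than isotropic ones.
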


This complex reflection law allows to talk about complex billiard orbits on the ellipse, as it will be done in subsection \ref{sec_orbits}. Before studying those orbits, it is necessary to introduce some geometric notions about projective conics.

\subsection{Preliminary results on complexified conics}
\label{sec_cmplxconic}

One needs to present here useful results on confocal conics. They can be found in \cite{berger90} and \cite{klein26}. \cite{glut1} also cites them in subsection 2.4. This section allows to understand some links between an ellipse and its Poncelet ellipse of triangular orbits, when both are complexified. Thus, by conic (resp. ellipse) we mean here the complex projective closure of a real regular conic (resp. ellipse). This choice of definition is due to the fact that the ellipse in which we study billard orbits is a real ellipse. Later, in order to define circumscribed circle (see section \ref{sec_circumcircles}), we will understand conics as complex conics (not just complexified real ones).

\begin{proposition}[\cite{berger90} subsection 17.4.2.1]
\label{circle_cycpoints}
A conic is a circle if and only if some of the cyclic points $I$ or $J$ belong to it. Furthermore, if a conic is a circle, then both $I$ and $J$ belong to it.
\end{proposition}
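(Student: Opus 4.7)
The statement reduces to a coordinate computation, so the plan is to substitute the cyclic points into the general equation of a complexified real conic and read off the algebraic conditions.

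I would start by writing a real regular conic as the zero set in $\cp^2$ of a homogeneous quadratic
$$Q(x,y,z) = ax^2 + bxy + cy^2 + dxz + eyz + fz^2$$
with real coefficients $a,\ldots,f$ (the reality of the coefficients comes from the convention recalled in the subsection: here a conic is the complex projective closure of a real regular conic). Plugging in $I = [1:i:0]$ gives $Q(I) = (a-c) + ib$. Because the coefficients are real, the complex equation $Q(I)=0$ is equivalent to the two real equations $a = c$ and $b = 0$. The substitution $J=[1:-i:0]$ yields the conjugate relation $(a-c)-ib = 0$, which produces \emph{exactly the same} two conditions. This single observation already proves the ``furthermore'' clause: $I$ lies on the conic if and only if $J$ does.

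It then remains to check that the conditions $a=c$, $b=0$ characterize circles among (complexifications of) real regular conics. Under these conditions the affine equation in the chart $z=1$ reads
$$a(x^2+y^2) + dx + ey + f = 0.$$
Since the conic is regular, the coefficient $a$ cannot vanish (otherwise the quadratic part would vanish entirely and the conic would degenerate into a line plus the line at infinity), so after dividing by $a$ one recovers the standard equation of a real circle. Conversely, any real circle has such an equation, and direct substitution shows that both $I$ and $J$ belong to its projective closure.

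There is no real obstacle here; the whole argument is a verification. The only subtlety worth flagging is the convention on coefficients: the splitting of $(a-c)+ib = 0$ into the two independent equations $a=c$ and $b=0$, on which the entire proof rests, uses crucially that the coefficients of $Q$ are real. If one worked with arbitrary complex conics, a single cyclic point would cut out only one complex condition and the ``furthermore'' clause would fail; this is precisely the reason the subsection restricts to complexified real conics.
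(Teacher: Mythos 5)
Your verification is correct, and the computation $Q(1,\pm i,0)=(a-c)\pm ib$ together with the reality of the coefficients is exactly the standard argument. Note that the paper itself offers no proof of this proposition --- it is quoted directly from Berger --- so there is nothing to compare against; your write-up simply supplies the routine check, including the one point that actually needs attention (regularity forcing $a\neq 0$, so that the conditions $a=c$, $b=0$ really do yield a circle rather than a degenerate pair of lines). The only hypothesis you use implicitly is that the real regular conic has nonempty real locus, which rules out ``imaginary circles'' such as $x^2+y^2+z^2=0$ that also pass through $I$ and $J$; this is guaranteed by the paper's convention that conics are projective closures of real regular conics.
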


In fact, a circle has two isotropic tangent lines intersecting at its center (see the following propositions).

\begin{proposition}[\cite{berger90} subsection 17.4.3.1]
\label{tangent_cycpoints}
A focus $f$ of a conic lies in the intersection of two isotropic tangent lines to the conic.
\end{proposition}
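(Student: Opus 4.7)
The plan is to verify the statement by a direct computation in normal-form coordinates, starting from the Euclidean definition of focus. By Proposition \ref{circle_cycpoints} the circle case is special --- there $I, J$ lie on the conic, the tangent lines at those points are the two isotropic tangents, and they meet at the center --- so I may assume the conic is not a circle. After an affine isometry, the ellipse case can be written as $x^2/a^2 + y^2/b^2 = 1$ with $a > b > 0$ and foci $(\pm c, 0)$, $c^2 = a^2 - b^2$; the hyperbola and parabola cases are analogous.

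The main step is a one-parameter discriminant calculation. Affine lines through the cyclic point $I = [1:i:0]$ are parametrized as $y = ix + t$ with $t \in \cmplx$; substituting into the conic equation gives a quadratic in $x$, and requiring its discriminant to vanish yields, after simplification, a quadratic equation in $t$ with roots $t = \pm ic$. Hence the two affine isotropic tangent lines through $I$ are $y = ix \pm ic$, and a symmetric computation for $J = [1:-i:0]$ gives $y = -ix \pm ic$. Substituting the focus coordinates into each of the four equations then shows that $(c, 0)$ lies on $y = ix - ic$ and on $y = -ix + ic$, while $(-c, 0)$ lies on the other two; each focus is therefore the intersection of one isotropic tangent through $I$ with one through $J$.

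The computation itself is routine; the only subtle point is that the line at infinity $\inftyline$ also passes through $I$ and is isotropic, but for a non-degenerate ellipse it is not tangent to the conic, so it does not enter the count of isotropic tangents through $I$. Conceptually, the same construction produces two further intersection points $(0, \pm ic)$, the complex-conjugate \emph{imaginary} foci of the conic, completing the projective picture: the four pairwise intersections of the two $I$-tangents with the two $J$-tangents give exactly the four projective foci of $\mathcal{C}$, and the proposition is the inclusion of each such focus in its two associated isotropic tangents.
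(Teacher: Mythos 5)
The paper does not prove this proposition at all: it is quoted verbatim from Berger (subsection 17.4.3.1) and used as a known classical fact, so there is no internal proof to compare against. Your computation is a correct, self-contained verification. The discriminant calculation checks out: substituting $y=ix+t$ into $x^2/a^2+y^2/b^2=1$ gives $(b^2-a^2)x^2+2ia^2tx+a^2(t^2-b^2)=0$, whose discriminant is $-4a^2b^2(t^2+c^2)$, so indeed $t=\pm ic$; the leading coefficient $b^2-a^2$ is nonzero precisely because you excluded the circle case, which you correctly dispatch via Proposition~\ref{circle_cycpoints} (tangents at $I$ and $J$ meeting at the center, the unique focus of a circle). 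Your enumeration of isotropic lines through $I$ as $\{y=ix+t\}\cup\{\inftyline\}$ is exhaustive, and the identification of the four pairwise intersections with the four complex foci matches the paper's subsequent Definition of foci and the Remark that a real ellipse has four complex foci, two of them real. The only point I would tighten is the sentence claiming the parabola case is ``analogous'': for a parabola the line at infinity \emph{is} tangent to the conic, so your remark that $\inftyline$ does not enter the count of isotropic tangents through $I$ fails there and the bookkeeping changes (one finite isotropic tangent through each of $I$, $J$, meeting at the single finite focus). Since the paper only ever applies this proposition to ellipses and circles, this is harmless here, but as written your argument fully covers only the central conics.
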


\begin{proposition}[\cite{klein26}, p.~179]
\label{confocal_conics}
Two complexified confocal ellipses have the same tangent isotropic lines, which are four isotropic lines taken with multiplicities: one pair intersecting on a focus, and the other one - on the other focus.
\end{proposition}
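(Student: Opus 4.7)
The plan is to exploit a simple observation about isotropic lines: through any finite point $p \in \cmplx^2$ there are exactly two isotropic lines, namely the unique line $(pI)$ and the unique line $(pJ)$. Indeed, an isotropic line through $p$ must contain $I$ or $J$ by definition, and since $p$ is finite it is distinct from $I, J$, so both lines are well-defined and uniquely determined.

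The key step is then to combine this with Proposition \ref{tangent_cycpoints}, which asserts that each focus $f$ of an ellipse lies at the intersection of two isotropic tangent lines. By the preceding observation, those two tangents must be precisely $(fI)$ and $(fJ)$, and so depend on $f$ alone, not on the ellipse. Applied to two confocal ellipses $\ellipse_1, \ellipse_2$ with common foci $f_1, f_2$, this immediately shows that the isotropic tangents through $f_k$ are $(f_k I)$ and $(f_k J)$ for both ellipses, yielding the asserted four common isotropic tangents, paired by focus.

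What then remains—and this is the step I would be most careful with—is to argue that these four lines exhaust all isotropic tangents to either $\ellipse_k$, and to justify the phrase ``with multiplicities.'' For this I would use the dual viewpoint: since by Proposition \ref{circle_cycpoints} neither $I$ nor $J$ lies on a non-circular ellipse (Remark \ref{rmk_circle}), exactly two tangent lines to $\ellipse_k$ pass through $I$, counted with multiplicity, and since $(f_1 I), (f_2 I)$ are already tangents through $I$ they must be those two; the same argument at $J$ accounts for $(f_1 J), (f_2 J)$. Distinctness of the four lines reduces to checking that $(f_1 I) \neq (f_2 I)$ and likewise for $J$, which holds because the foci are distinct real finite points while no real affine line passes through the non-real cyclic points. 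This finishes the proof once the bookkeeping of multiplicities is handled.
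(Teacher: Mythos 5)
The paper does not prove this proposition at all: it is quoted as a classical fact with a reference to Klein (and the neighbouring statements to Berger), so there is no in-paper argument to compare yours against. Your blind proof is, however, a correct and self-contained derivation from the surrounding material. The two pillars are sound: (i) through a finite point $p$ the only isotropic lines are $(pI)$ and $(pJ)$, so Proposition \ref{tangent_cycpoints} forces the two isotropic tangents through a focus $f$ to be exactly $(fI)$ and $(fJ)$, which depend on $f$ alone and hence are shared by confocal ellipses; (ii) duality gives exactly two tangents through each of $I$ and $J$ (distinct, since by Proposition \ref{circle_cycpoints} and Remark \ref{rmk_circle} neither cyclic point lies on a non-circular ellipse), and every isotropic tangent passes through $I$ or $J$, so the four exhibited lines exhaust all isotropic tangents. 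One small logical reordering would tighten the write-up: you need $(f_1I)\neq(f_2I)$ \emph{before} you can conclude that these two lines are ``the'' two tangents through $I$, so the distinctness observation (the foci line is real and its point at infinity is not a cyclic point) should precede the exhaustion step rather than follow it. Also note that the phrase ``with multiplicities'' in the statement is really there to cover the circle case, where the foci collapse to the centre and the tangents at $I$ and $J$ count double; since the paper excludes circles, your remark that the bookkeeping is trivial here is accurate.
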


This brings us to the following redefinition of the foci:

\begin{definition}[\cite{berger90} subsection 17.4.3.2]
The (complex) \textit{foci} of an ellipse are the points in $\cmplx^2$ on which its isotropic tangent lines intersect.
\end{definition}

\begin{remarque}
The complex projective closure of a real ellipse has four complex foci, including two real ones.
\end{remarque}

\begin{corollary}
\label{istropy_foci}
A conic has at most four dinstinct finite isotropic tangent lines, each two of them intersecting either at a focus, or at an isotropic point at infinity.
\end{corollary}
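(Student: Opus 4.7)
The plan is to combine two ingredients: the fact that a line in $\cp^2$ is isotropic precisely when it passes through one of the cyclic points $I$ or $J$, and the classical fact that from any point of $\cp^2$ one can draw at most two tangent lines to a smooth conic (obtained by intersecting the pencil of lines through the point with the dual conic, which yields a degree-$2$ equation).

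First I would observe that any isotropic line distinct from $\inftyline$ contains exactly one of $I$, $J$: if it contained both, it would equal the unique line through $I$ and $J$, namely $\inftyline$. So the finite isotropic tangent lines split into those through $I$ and those through $J$. Applying the at-most-two-tangents fact to $I$ and to $J$ separately yields the bound of $2+2 = 4$.

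Then I would run through the possible pairs. Two distinct finite isotropic tangent lines $l, l'$ both passing through $I$ meet in the single point $I$, which is an isotropic point at infinity; the case of both through $J$ is symmetric. If $l$ passes through $I$ and $l'$ passes through $J$, the lines are distinct (since $\inftyline$ is the only line through both cyclic points), so they meet in one point $P$. The only verification required is that $P$ is finite: since $l \cap \inftyline = \{I\}$ and $l' \cap \inftyline = \{J\}$ with $I \neq J$, the point $P$ cannot lie on $\inftyline$. Thus $P \in \cmplx^2$ is the intersection of two isotropic tangent lines to the conic, which by the definition of complex foci stated just above means $P$ is a focus, completing the proof. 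Since all the ingredients are already in place from propositions \ref{tangent_cycpoints} and the preceding definition, the main obstacle is simply this small verification that the mixed-cyclic-point intersection does not escape to infinity.
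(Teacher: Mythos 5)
Your argument is correct and is exactly the intended derivation: the paper states this as an immediate corollary without giving a proof, and your reasoning (at most two tangents from each of $I$ and $J$, lines through the same cyclic point meet there, and a mixed pair meets at a finite point which is a focus by definition) supplies precisely the missing details. The verification that the mixed intersection point is finite, via $l \cap \inftyline = \{I\} \neq \{J\} = l' \cap \inftyline$, is the right small check to make explicit.
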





\subsection{Complex orbits}
\label{sec_orbits}
 
We have enough material at this stage to study complex triangular orbits. See \cite{glut1}, definition 1.3, for a more general definition of periodic orbits.

\begin{definition}[\cite{glut1}, definition 1.3]
A \textit{non-degenerate triangular complex orbit} on a complex conic $\ellipse$ is an ordered triple of points $A_1,A_2,A_3$ on $\ellipse$ so that for all $i$, one has $A_i \neq A_{i+1}$, the tangent line $T_{A_i}\ellipse$ is non isotropic, and the complex lines $A_iA_{i+1}$ and $A_iA_{i-1}$ are symmetric with respect to $T_{A_i}\ellipse$ (with the obvious convention $A_0 := A_3$ and $A_4 := A_1$). A \textit{side} of a non-degenerate orbit is a complex line $A_iA_{i+1}$.
\end{definition}

\begin{remarque}
The vertices of a non-degenerate orbit are not collinear since a line intersects the ellipse in at most two points.
\end{remarque}

\begin{remarque}
\label{rem_non_isotropic_sides}
As explained in \cite{glut2}, the reflexion with respect to a non-isotropic line permutes the isotropic directions $I$ and $J$. This argument implies that a non-degenerate triangular orbit has no isotropic side.
\end{remarque}

We will also study the limit orbits of the above defined orbits, which will be called degenerate orbits.

\begin{definition}[\cite{glut1}]
A \textit{degenerate triangular complex orbit} on a complex conic $\ellipse$ is an ordered triple of points in $\ellipse$ which is the limit of non-degenerate orbits and which is not a non-degenerate orbit. We define the \textit{sides} of a degenerate orbit as the limits of the sides of the non-degenerate orbits which converge to it.
\end{definition}

\begin{proposition}[\cite{romaskevich2014}, lemma 3.4]
\label{lemma_olga}
A degenerate orbit of an ellipse $\ellipse$ has an isotropic side $A$ which is tangent to $\ellipse$, and two coinciding non-isotropic sides $B$.
\end{proposition}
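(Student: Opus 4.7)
My plan is to unpack the definition of a degenerate orbit and apply Lemma \ref{lemma_glutsyuk} as the central tool. Take a degenerate orbit $(A_1,A_2,A_3)$ realized as the limit of non-degenerate orbits $(A_1^{(n)},A_2^{(n)},A_3^{(n)})$. Since the limit is not itself non-degenerate, at least one defining condition fails: either some $A_i = A_{i+1}$, or some tangent $T_{A_i}\ellipse$ is isotropic. The goal is to show that, up to relabeling, two consecutive vertices collide (say $A_1 = A_2$), the tangent $T_{A_1}\ellipse$ is isotropic and coincides with the side $A_1A_2$, and the two remaining sides fall onto a common non-isotropic line.

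First I would rule out the case where the $A_i$ are pairwise distinct. In that case only the tangent condition can fail, so some tangent, say $T_{A_1}\ellipse$, is isotropic. The reflection relation at $A_1^{(n)}$ passes to the limit by the very definition of symmetry with respect to an isotropic line, hence $A_1A_2$ and $A_1A_3$ are symmetric with respect to $T_{A_1}\ellipse$. Lemma \ref{lemma_glutsyuk} then forces one of them to equal $T_{A_1}\ellipse$; but a tangent meets $\ellipse$ only at its contact point, so the corresponding opposite vertex coincides with $A_1$, contradicting distinctness. Relabeling, I may therefore assume $A_1 = A_2$. The chord $A_1^{(n)}A_2^{(n)}$ then converges to $T_{A_1}\ellipse$, while $A_1^{(n)}A_3^{(n)}$ and $A_2^{(n)}A_3^{(n)}$ both converge to the single secant $B := A_1A_3$, giving the two coinciding sides of the statement.

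It remains to check that $T_{A_1}\ellipse$ is isotropic. Passing the reflection law at $A_1^{(n)}$ to the limit, the lines $T_{A_1}\ellipse$ and $B$ are symmetric with respect to $T_{A_1}\ellipse$. If $T_{A_1}\ellipse$ were non-isotropic, the induced complex-isometric involution would preserve $T_{A_1}\ellipse$ as a whole, forcing $B = T_{A_1}\ellipse$ and therefore $A_3 \in T_{A_1}\ellipse \cap \ellipse = \{A_1\}$, which would collapse all three vertices onto a single point. Excluding this triple collapse, $T_{A_1}\ellipse$ is isotropic and provides the desired side $A$; the non-isotropy of $B$ then follows by applying Lemma \ref{lemma_glutsyuk} at $A_3$, where the two incoming sides already coincide and $T_{A_3}\ellipse$ remains non-isotropic. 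The main obstacle I expect is disposing of this triple-collapse case cleanly, since it lies outside the conclusion of the proposition; eliminating it should rely on a genericity or continuity argument tied to the Poncelet parametrization of triangular orbits, whereby a vertex moving along $\ellipse$ picks out at most one colliding neighbor.
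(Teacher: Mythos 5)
The paper does not actually prove this proposition --- it is imported verbatim from \cite{romaskevich2014} (Lemma~3.4) --- so there is no in-house argument to compare against; I will therefore judge your proof on its own terms. Your overall strategy (pass the reflection law to the limit and invoke Lemma~\ref{lemma_glutsyuk}) is sound, and it correctly disposes of the case of pairwise distinct vertices and of the case of exactly one collision. But the triple-collapse case $A_1=A_2=A_3$ is a genuine gap, not a formality you can wave away: if all three vertices of the approximating orbits converged to a single point $p$, all three sides would converge to $T_p\ellipse$, there would be no non-isotropic side $B$ at all, and the proposition as stated would be \emph{false} for that limit. Your own limiting argument cannot exclude it (an ``orbit'' $(p,p,p)$ with all sides equal to an isotropic tangent line satisfies the degenerate reflection law trivially, by Lemma~\ref{lemma_glutsyuk}), and ``a genericity or continuity argument tied to the Poncelet parametrization'' is not a proof.

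The case can be closed, but it needs the confocal caustic $\Gamma$, which your proof never uses. At each vertex of a non-degenerate orbit the two adjacent sides are the two \emph{distinct} tangent lines from that vertex to $\Gamma$; these depend continuously on the vertex and can merge in the limit only if the limit vertex $p$ lies on $\Gamma$. A triple collapse forces exactly this merging at $p\in\ellipse\cap\Gamma$, with the merged line equal to $T_p\ellipse=T_p\Gamma$, i.e.\ the two confocal conics would be tangent at $p$; this is impossible because by Proposition~\ref{confocal_conics} their only common tangents are the four isotropic lines through the foci, and each of those touches $\ellipse$ and $\Gamma$ at \emph{different} points. (The same tangency-with-$\Gamma$ argument, which is essentially Romaskevich's route, also gives you the isotropy of $A$ and the non-isotropy of $B$ in one line each: $A$ is tangent to both confocal conics, hence isotropic; $B$ is a secant of $\ellipse$ through two distinct points, hence not a common tangent, hence not an isotropic tangent of $\Gamma$. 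It would also let you replace your somewhat under-justified assertion that ``$T_{A_3}\ellipse$ remains non-isotropic''.) Until the triple-collapse case is actually eliminated, the proof is incomplete.
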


During the proof, it will be convenient to distinguish two types of orbits : the ones with no points at infinity, and the others, with at least one point at infinity:

\begin{definition}
An \textit{infinite triangular complex orbit} on a complex conic $\ellipse$ is an orbit which owns at least one vertex on the infinity line. The orbits with only finite vertices are called \textit{finite orbits}.
\end{definition}

\begin{proposition}
\label{lemma_infinite_orbit}
An infinite orbit is not degenerate, and owns exactly one vertex at infinity.
\end{proposition}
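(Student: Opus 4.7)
The plan is to argue both claims by contradiction, exploiting the structure of $\ellipse \cap \inftyline$ in the non-circular setting. Proposition \ref{circle_cycpoints} gives $\ellipse \cap \inftyline = \{P_+, P_-\}$ with $P_\pm \notin \{I,J\}$. Moreover, since $\inftyline$ is a secant (not tangent) to $\ellipse$, each tangent $T_{P_\pm}\ellipse$ is distinct from $\inftyline$ and meets $\inftyline$ only at $P_\pm$; in particular, it contains neither $I$ nor $J$, so it is a non-isotropic finite line. This preliminary observation will be used in both parts of the proof.

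\textbf{Non-degeneracy.} Suppose for contradiction that an infinite orbit is degenerate. By Proposition \ref{lemma_olga}, one side is an isotropic line tangent to $\ellipse$, and the other two coincide and are non-isotropic. Since a tangent line meets $\ellipse$ at a unique point, the two endpoints of the isotropic tangent side must coalesce at the tangency point $Q$; the orbit thus has the shape of a doubled vertex $Q$ and a single distinct vertex $A$, with the two coinciding non-isotropic sides equal to the chord $QA$, and the isotropic side equal to $T_Q\ellipse$. Since $T_Q\ellipse$ is isotropic while $T_{P_\pm}\ellipse$ are not, $Q \notin \{P_+, P_-\}$, so $Q$ is finite. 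As the orbit is infinite, $A$ must therefore be at infinity, hence $A \in \{P_+, P_-\}$. The chord $QA$ passes through $A \in \inftyline$, so its direction at infinity is $A$ itself, the same as that of $T_A\ellipse$; thus $QA$ and $T_A\ellipse$ are parallel in $\cp^2$, and distinct (otherwise $Q$ would lie on $T_A\ellipse \cap \ellipse = \{A\}$, contradicting $Q$ finite). The reflection law at $A$ requires that $QA$ — the common value of the two sides meeting at $A$ — be fixed by the reflection across $T_A\ellipse$; but a complex-isometric reflection fixes a line parallel to but distinct from its axis only if that line is perpendicular to the axis, which would require $(A,A)=0$, i.e.\ $A \in \{I,J\}$. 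This is excluded, yielding the contradiction.

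\textbf{Exactly one infinite vertex.} The infinite orbit is now known to be non-degenerate, so by definition its tangent at each vertex is non-isotropic. Suppose two vertices $A_i, A_j$ are at infinity; then $\{A_i, A_j\} = \{P_+, P_-\}$, and the unique line through two distinct points of $\inftyline$ is $\inftyline$ itself, so $A_iA_j = \inftyline$. The complex reflection at $A_i$ across $T_{A_i}\ellipse$ is induced by a complex-isometric involution of $\cmplx^2$; extending projectively to $\cp^2$ preserves the affine chart $\cmplx^2$ and so preserves $\inftyline$ setwise. Hence the reflection of $\inftyline$ across $T_{A_i}\ellipse$ is $\inftyline$, and by the reflection law the third side at $A_i$ also equals $\inftyline$, forcing the third vertex to lie in $\ellipse \cap \inftyline = \{P_+, P_-\}$. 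This contradicts the distinctness of the three vertices of the orbit.

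The main obstacle is the first part: one has to identify the precise shape of a degenerate infinite orbit (finite doubled vertex, isolated infinite vertex, parallel chord and tangent at that infinite vertex), then extract a contradiction from the reflection law at the infinite vertex by analysing the fixed lines of a non-isotropic reflection. Non-circularity is used critically at two places — to separate $\{P_+, P_-\}$ from $\{I,J\}$, and to prevent the chord from accidentally being perpendicular to the tangent at $A$. The second part is then a straightforward consequence of the affine-preservation of $\inftyline$.
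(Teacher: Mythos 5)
Your proof is correct and follows essentially the same route as the paper's: Proposition \ref{lemma_olga} forces a degenerate infinite orbit to have a finite doubled vertex (tangency point of the isotropic side) and an infinite vertex at which the coinciding chord is parallel to, yet must be fixed by reflection across, the non-isotropic tangent line, which is impossible; and two infinite vertices would make one side equal to $\inftyline$, which reflects to itself and forces the third vertex onto $\inftyline\cap\ellipse$. The only differences are cosmetic — you establish non-degeneracy first and phrase the final contradictions slightly differently (isotropy of the direction $A$ rather than the paper's ``$T_\beta=B$ but $B$ is not tangent'') — so no further changes are needed.
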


\begin{proof}
Suppose two vertices, $\alpha,\beta$, of the orbit are at infinity. Then, $\alpha \beta$ is the infinity line. But the tangent $T_{\beta}$ to the ellipse $\ellipse$ in $\beta$ is not isotropic, and the infinity line reflects to itself through the reflexion by $T_{\beta}$. Hence, the orbit is $\{\alpha,\beta\} = \inftyline \cap \ellipse$, which should be a degenerate orbit. But it cannot be a degenerate orbit by Proposition \ref{lemma_olga} since the tangent lines to its vertices $\alpha, \beta$ are not isotropic. Thus, only one vertex lies at infinity.

Therefore, if it is a degenerate orbit, it has two vertices, $\alpha,\beta$, corresponding by Proposition \ref{lemma_olga} to two sides, $A$ which is isotropic and tangent to the ellipse in $\alpha$, and $B$ which is a line going through $\alpha$ and $\beta$. Since the tangency points of isotropic tangent lines are finite, $\alpha$ is finite. Thus $\beta$ is infinite (because the orbit is supposed infinite). Then $B$ and the tangent line $T_{\beta}\ellipse$ to the ellipse in $\beta$ are collinear (since they have the same intersection point at infinity). But both are stable by the complex reflexion by $T_{\beta}$, hence $T_{\beta}\ellipse = B$ which is impossible since $B$ is not tangent to the ellipse.
\end{proof}

\section{Circumcircles and circumcenters of complex orbits}
\label{sec_circumcircles}

Here we present the last part of the required definitions, which concerns the complex circles circumscribed to triangular orbits. This part is different from the previous one, because here the considered conics are complex and not necessarily complexified versions of real conics.

\begin{definition}
A \textit{complex circle} is a regular complex conic passing through both isotropic points at infinity. Its \textit{center} is the intersection point of its tangent lines at the isotropic points.
\end{definition}

\begin{remarque}
In every sequence of complex circles, one can choose a subsequence which limits to either a circle, or a pair of isotropic finite lines, or a pair of lines from which one is being infinite and the other one is being finite, or the infinity line taken twice.
\end{remarque}

\begin{proposition}
\label{prop_circle_reg_orbit}
For a non-degenerate finite orbit, there is a unique complex circle passing through the vertices of the orbit and both isotropic points at infinity. It is called the circumscribed circle or circumcircle to the non-degenerate orbit.
\end{proposition}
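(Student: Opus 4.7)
The plan is to reduce the proposition to the classical fact that five points of $\cp^2$, no three of them collinear, lie on a unique irreducible conic. I will apply this to the five points $A_1, A_2, A_3, I, J$, which are automatically distinct: the orbit is finite, so the $A_i$ lie in $\cmplx^2$ while $I, J \in \inftyline$. The resulting irreducible conic then contains both $I$ and $J$ and is therefore a complex circle by proposition \ref{circle_cycpoints}. Thus all the real work consists in checking that no three of these five points are collinear.

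I would first dispatch the easy cases. The three $A_i$ are non-collinear because the orbit is non-degenerate (already noted in the remark following the definition). The only line through $I$ and $J$ is $\inftyline$, which contains no finite $A_i$, so no triple of the form $\{I, J, A_i\}$ is collinear. The only remaining threat is a triple $\{A_i, A_{i+1}, I\}$ or $\{A_i, A_{i+1}, J\}$: equivalently, that some side $A_iA_{i+1}$ of the orbit is isotropic.

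The crux of the argument is therefore to exclude isotropic sides. The key lemma to prove is that the reflection across a non-isotropic line through a finite point swaps the two isotropic directions $(1,i)$ and $(1,-i)$. Indeed, such a reflection is a nontrivial $\cmplx$-linear involution preserving $dx^2+dy^2$, so it permutes the two null directions; it cannot fix both, since together with the fixed direction of the axis that would force a fixed basis and hence the identity. Consequently, if $A_1A_2$ were isotropic through $I$, reflecting at $T_{A_2}\ellipse$ would force $A_2A_3$ to be isotropic through $J$, and reflecting at $T_{A_3}\ellipse$ would then force $A_3A_1$ to be isotropic through $I$; but then both $A_1A_2$ and $A_3A_1$ would coincide with the unique line through $A_1$ and $I$, making $A_1,A_2,A_3$ collinear, contradicting non-degeneracy.

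Combining these observations, the five points are in linearly general position, and the classical argument yields the unique irreducible conic through them, which is a circle by proposition \ref{circle_cycpoints}. The main obstacle is the reflection step: one must translate the analytic definition of complex reflection into the combinatorial swap of isotropic directions at each vertex, so that a single isotropic side would propagate around the triangle and collapse the orbit. Everything else is a standard application of projective geometry together with \ref{circle_cycpoints}.
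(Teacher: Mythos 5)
Your proposal is correct, and its overall skeleton (five points $A_1,A_2,A_3,I,J$, no three collinear, hence a unique irreducible conic, which is a circle by proposition \ref{circle_cycpoints}) is exactly the paper's. Where you genuinely diverge is in the crucial step of excluding an isotropic side. The paper rules out collinearity of $A_i, A_{i+1}, I$ by invoking the confocal Poncelet caustic $\Gamma$: each side is tangent to $\Gamma$, and by proposition \ref{confocal_conics} an isotropic tangent to $\Gamma$ would also be tangent to $\ellipse$, forcing $A_i = A_{i+1}$. You instead work directly with the reflection law: a reflection about a non-isotropic line is an isometry of $dx^2+dy^2$ in the non-identity component, so it swaps the two null directions; hence a single isotropic side would propagate around the triangle with alternating isotropy type, and after an odd cycle the two sides through $A_1$ would both pass through $I$, collapsing the vertices onto one line. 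Both arguments are sound (your eigenvalue argument that the involution cannot fix both isotropic lines is correct, since the fixed non-isotropic axis direction has nonzero components on both null basis vectors). Your route is more self-contained: it uses only the definition of complex reflection and does not need the unproved-in-this-paper fact that the sides of a \emph{complex} orbit remain tangent to $\Gamma$, nor proposition \ref{confocal_conics}. The paper's route is shorter given that the caustic machinery is already set up and reused elsewhere (e.g.\ in propositions \ref{prop_circum_finite} and \ref{cor_intersection}). Note also that the parity of the triangle is essential to your argument --- it is precisely the oddness of the cycle that produces the contradiction --- whereas the paper's tangency argument kills an isotropic side outright, independently of the period; this is worth keeping in mind if one wanted to generalize.
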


\begin{proof}
Denote by $\alpha, \beta, \gamma$ the vertices of the orbit. We have to prove that no three points of $\alpha, \beta, \gamma, I, J$ are collinear. Indeed, as no vertices are on the infinity line, we only need to study two different cases:
\begin{enumerate}
    \item $\alpha, \beta, \gamma$ are not collinear because they are disctinct and they lie on the ellipse which has at most two intersection points with any line.
    \item $\alpha, \beta, I$ are not collinear or else the line $\alpha \beta$ would be isotropic. But this is impossible for a non-degenerate triangular orbit by Remark \ref{rem_non_isotropic_sides}.
    
    We then exclude all other possible combinations of two vertices of the orbit with $I$ or $J$, using the same arguments.
    
    
\end{enumerate}
\end{proof}

Let us extend this definition to degenerate orbits.

\begin{definition}
\label{def_circle_deg_orbit}
If $T$ is a degenerate or a infinite orbit, its \textit{circumscribed circle} is a limit of the circumscribed circles of non-degenerate finite orbits converging to $T$. The \textit{circumcenter} of $T$ is defined in a similar way.
\end{definition}

Even if they are called \textit{circle}, the circumscribed circles to a degenerate or infinite orbit can degenerate into pairs of lines, as described below.

\begin{proposition}
\label{prop_types_circle}
The possible cases for the circumscribed circle to a degenerate or infinite orbit are the following :
\begin{enumerate}
	\item a regular circle ;
	\item a pair of isotropic  non-parallel finite lines ; the corresponding center lies on their intersection ; 
	\item the infinite line and a finite line $d$ ; the center $c$ lies on the infinity line and represents a direction which is orthogonal to $d$ ; 
	\item the infinity line (taken twice).
\end{enumerate}
\end{proposition}

\begin{proof}
The equation of a regular circle $\mathcal{D}$ is of the form
$$a(x^2+y^2)+pxz+qyz+rz^2 = 0$$
where $a, p,q,r\in\cmplx$, $a\neq 0$ and $4a r \neq p^2+ q^2$. Both isotropic tangent lines to $\mathcal{D}$ have equations $2a(x \pm iy) +(p\pm i q)z = 0$, whose intersection is $c = (p:q:-2a)$, which is the center of $\mathcal{D}$ by definition.

If we take a limit of regular circles, the equation of the limit circle is of the same type, that is
$$a(x^2+y^2)+pxz+qyz+rz^2 = 0$$
but maybe with $a=0$ or $4a r = p^2+ q^2$. And the center $c$ is still of coordinates $(q:p:-2a)$. 

If $a=0$, the limit circle is the union of the infinity line ($z=0$) and the line $d$ of equation $px+qy+rz = 0$. The line $d$ is finite if and only if $(p,q)\neq 0$, and in this case it has direction $(q,-p)$. Since $c = (p:q:0)$, the direction represented by $c$ is orthogonal to $d$. If $d$ is infinite, the limit circle is the (double) infinity line. Note that in this case the center can be an arbitrary point.

If $a\neq 0$, but $4a r = p^2+ q^2$, the equation of the limit circle becomes
$$\left(x+\frac{p}{2a}z\right)^2+\left(y+\frac{q}{2a}z\right)^2=0$$
which is the equation of two isotropic non collinear lines intersecting at the point 
$(-\frac{p}{2a}:-\frac{q}{2a}:1) = (p:q:-2a) = c$.

If $a\neq 0$ and $4a r \neq p^2+ q^2$, the limit circle is regular.
\end{proof}

Now let us find which triangular orbits have their center on the line of real foci of $\ellipse$. 

\begin{proposition}
\label{prop_orbit_form}
A complex triangular orbit of circumcenter lying on the foci line is finite, non-degenerate, symmetric with respect to the real foci line of $\ellipse$, and has a vertex on it.
\end{proposition}

\begin{proof}
Let $T$ be a triangular orbit with a circumscribed circle $C$ having a center $c$ on the real foci line of $\ellipse$.

\textbf{First case : Suppose $T$ is finite and non-degenerate.} We follow the arguments of Romaskevich \cite{romaskevich2014} who treated the similar case for incenters. Indeed, in this case two vertices of $T$ should lie on a line orthogonal to the real foci line: otherwise, by symmetry of $\ellipse$ and $C$ with respect to the foci line, the intersection $\ellipse\cap C$ would count six points, which is impossible since $\ellipse$ is not a circle. Finally, the remaining vertex has to be on the foci line, or else we could find two distinct orbits sharing a common side, which is impossible by definition of the reflexion law with respect to non-isotropic lines.

\textbf{Second case : Suppose $T$ is infinite.} Then the infinity line cuts $C$ in three distinct points, hence $C$ is degenerate. By Proposition \ref{prop_types_circle}, $C$ contains the infinity line. Since $T$ has only one infinite vertex $\alpha$ by Proposition \ref{lemma_infinite_orbit}, and two other finite vertices $\beta,\gamma$, the other line $d$ is not the infinity line. Again by Proposition \ref{prop_types_circle}, the center is infinite and represents the orthogonal direction to $d$. Since it is on the real foci line, the latter is orthogonal to $d$. Then $d$ does not contains $\alpha$, or else $d$ would be infinite by symmetry with respect to the foci line. We have $d=\beta\gamma$ is a side of $T$, $\alpha\notin d$ and by the same symmetry argument as in the first case $\alpha$ should belong to the real foci line. But this is impossible since the latter cuts $\ellipse$ in only two finite points.

\textbf{Last case : Suppose $T$ is degenerate.} Then $C$ cannot be a regular circle, otherwise the latter would be tangent to $\ellipse$ in a point of isotropic tangency (by Proposition \ref{lemma_olga}): this would imply that this point of isotropic tangency is $I$ or $J$, which is impossible since they do not belong to $\ellipse$, assumed not to be a circle.

The circumcircle $C$ cannot be the union of the infinity line and another line $d$. Otherwise, by the same arguments as in the second case, this line would be othogonal to the real foci line. Since $T$ is finite (Proposition \ref{lemma_infinite_orbit}), $d$ goes through its both vertices, implying that both are points of isotropic tangency of $\ellipse$. But this cannot happen for a degenerate triangular orbit.

Finally suppose $C$ is  the union of two isotropic lines having different directions.

\begin{lemme}
\label{lemma_isotropic_circles}
Let $C_n$ be a sequence of circles going through two distinct points $M_n$ and $N_n$ of $\ellipse$ converging to the same finite point $\alpha$. Suppose $C_n$ has a center $c_n$ converging to a finite point $c\neq\alpha$. Then the line $c\alpha$ is orthogonal to the line $T_{\alpha}\ellipse$.
\end{lemme}

\begin{proof}
The tangent line to $C_n$ at $M_n$ is orthogonal to the line $M_nc_n$ hence the same is true for their limits. The limit of $T_{M_n}C_n$ is obviously the limit of the line $M_nN_n$. Since $M_n$ and $N_n$ are on $\ellipse$, the line $M_nN_n$ also converges to the tangent line $T_{\alpha}\ellipse$. Hence $T_{\alpha}\ellipse$ is orthogonal to $\alpha c$.
\end{proof}

Thus if $\alpha$ is a vertex of isotropic tangency of the orbit, Lemma \ref{lemma_isotropic_circles} implies that $\alpha c$ is orthogonal to $T_{\alpha}\ellipse$, hence $\alpha c = T_{\alpha}\ellipse$ since the latter is isotropic. Since both isotropic lines constituing the circle go through $c$, one of them is $T_{\alpha}\ellipse$, and they are both tangent to $\ellipse$ by symmetry with respect to the real foci line. Thus the other vertex of $T$ is a point of isotropic tangency of $\ellipse$, which is not possible by the previous arguments (such an orbit is not closed).
\end{proof}

\section{Proof of Theorem \ref{thethm}}
\label{sec_proof}

We reall that $\ellipse$ is a complexified ellipse, which we will identify with $\overline{\cmplx}$. Denote by $\gamma$ the real ellipse inscribed in all triangular real periodic orbits. We use the same notation $\gamma$ for its complexified version. 

Consider the Zariski closure $\mathcal{T}$ of the set of real triangular orbits (which are circumscribed about $\gamma$). Let $\mathcal{T}_3$ denote the set of triangles with vertices in $\ellipse$ that are circumscribed about $\gamma$. It is a Zariski closed set of $\ellipse^3\simeq\left(\cp^1\right)^3$ that contains the real orbits and can be identified with the set of pairs $(A,L)$, where $A$ is a point of the complexified ellipse $\ellipse$ and $L$ is a line through $A$ that is tangent to $\gamma$. The set of the above  pairs $(A,L)$ is identified with an elliptic curve, and each pair extends to a circumscribed triangle as above, see the complex Poncelet Theorem and its proof in \cite{poncGH} for more details. Hence $\mathcal{T}_3$ is an irreducible algebraic curve. Each triangle in $\mathcal{T}$ is circumscribed about $\gamma$, by definition and since this is true for the real triangular orbits and the tangency condition of the edges with $\gamma$ is algebraic. Thus $\mathcal{T}\subset\mathcal{T}_3$. Hence $\mathcal{T}=\mathcal{T}_3$, by definition and since the curve of real triangular orbits (which is contained in $\mathcal{T}$) is Zariski dense in $\mathcal{T}_3$ (irreducibility). Now the set $\hat{\mathcal{T}}\subset\mathcal{T}$ of complex non-degenerate triangular orbits circumscribed about the Poncelet ellipse $\gamma$ is a subset of $\mathcal{T}_3 = \mathcal{T}$, Zariski open in $\mathcal{T}$ (because $\mathcal{T}\setminus\hat{\mathcal{T}}$ is defined by polynomial equations). Note that $\mathcal{T}\setminus\hat{\mathcal{T}}$ is finite (since it is a proper Zariski closed subset of $\mathcal{T}$), and $\hat{\mathcal{T}}$ is dense in $\mathcal{T}$ for the usual topology. Thus the analytic map $\phi : \hat{\mathcal{T}} \to \cp^2$ which assigns to a non-degenerate orbit its circumcenter can be extended to a holomorphic map $\mathcal{T}\to\cp^2$, being a rational map. And by Remmert proper mapping theorem (see \cite{GH}), its image denoted by $\hat{\ccenters}$ is an irreducible analytic curve of $\cp^2$, hence it is an irreducible algebraic curve by Chow theorem (see \cite{GH}). 


Let us show that $\hat{\ccenters}$ is a conic, using Bezout theorem and studying its intersection with the real foci line of $\ellipse$. In fact, we already know two distinct points lying on this intersection: the circumcenters $c_1$ and $c_2$ of both triangular real orbits $T_1$ and $T_2$ circumscribed about Poncelet's ellipse $\gamma$ and having a vertex on the foci line.

\begin{lemme}
\label{lemma_nb_intersection}
The foci line of the ellipse intersects $\hat{\ccenters}$ in only $c_1$ and $c_2$ which are distinct, and for each $i$ the only triangular orbit of $\mathcal{T}$ having $c_i$ as a circumcenter is $T_i$.
\end{lemme}

\begin{proof}
Take a point $c$ of $\hat{\ccenters}$ lying on the foci line. Then by Proposition \ref{prop_orbit_form}, an orbit of center $c$ is finite, non-degenerate, and has a vertex on the foci line. If this orbit is in $\mathcal{T}$, it is circumscribed about $\gamma$. One of its vertices lies on the foci line, hence coincides with a vertex of some $T_i$. Hence it is $T_1$ or $T_2$, otherwise we could find a number strictly greater than two of tangent lines to $\gamma$ going through a vertex of $\ellipse$. Furthermore, if $c_1=c_2$, the circumcircle of $T_1$ would be the same as the one of $T_2$ by symmetry, and $\ellipse$ would share six dictinct points with the former, which is impossible. The result follows.
\end{proof}

\begin{theoreme}
\label{lemme_intersection}
The set $\hat{\ccenters} \subset \cp^2$ is an ellipse.
\end{theoreme}

\begin{proof}
Let us show that $c_1$ is a regular point of $\hat{\ccenters}$, and that the latter intersects the foci line transversally. Fix an order on the vertices of $T_1$ and consider the germ $(\mathcal{T},T_1)$. The latter is irreducible (because parametrized by $\gamma$), hence the germ $(V,c_1)\subset(\hat{\ccenters},c_1)$ defined as $\phi(\mathcal{T},T_1)$ is also irreducible. By Lemma \ref{lemma_nb_intersection}, any other irreducible component $V'$ of $(\hat{\ccenters},c_1)$ is parametrized locally by $\phi$ and a germ $(\mathcal{T}, T_1')$, where $T_1'$ is obtained from $T_1$ by a permutation of its vertices. Thus $V'=V$ since $\phi$ doesn't change by permutation of the vertices of the orbits: $(\hat{\ccenters},c_1)$ is irreducible. 

We fix a local biholomorphic parametrization $P(t)$ of the complexified ellipse $\ellipse$, so that $P_0 = P(0)$ is a vertex of the real ellipse $\ellipse$ that is also a vertex of the real triangular orbit $T_1$. This gives local parametrizations of the orbits $T(P)$ whose first vertex is $P$ and of their circumcenters $c(t) = \phi(T(P(t)))$. We restrict $P$ to the curve $P(t)$ parametrizing the real points of $\ellipse$. We can suppose that $P(t)$ and $P(-t)$ are symmetric with respect to $\bigaxe$. Write $r(t) = |P(t)c(t)|$ for the radius of the circumscribed circle to $T(t)$. Thus we have $c(0)=\phi(T_1)=c_1$, and we need to show that $c'(0) \neq 0$ and that $c'(0)$ has not the same direction as the line of real foci of $\ellipse$. 

First, we have $r(t) = r(-t)$ by symmetry, and $r$ is smooth around $0$ since $P(0) \neq c(0)$. Thus, $r'(0) = 0$ which implies that $c'(0)-P'(0)$ is orthogonal to the foci line. But $P'(0)$ is already orthogonal to the foci line, hence the same hold for $c'(0)$. It's then enough
to show that $c'(0) \neq 0$.

Suppose the contrary, i.e. $c'(0) = 0$. We use again $r'(0) = 0$. If we denote by $Q(t)$ one of the other vertices of $T(t)$ and $Q_0=Q(0)$, then since also $r(t) = |Q(t)c(t)|$, the equality $r'(0) = 0$ gives that the line $Q_0c_1$ is orthogonal to $T_{Q_0}\ellipse$. It means that the circumscribed circle $\mathcal{D}$ to $T_1$ has the same tangent line in $Q_0$ as $\ellipse$. Since this is also true in $P_0$ and in the third point of $T_1$ (same proof), we get that $\ellipse$ and $\mathcal{D}$ have three common points with the same tangent lines, which means that $\ellipse$ is a circle. But this case was excluded at the beginning (remark \ref{rmk_circle}).

Hence $c'(0) \neq 0$ and $c'(0)$ is orthogonal to the line of real foci. The proof is the same for $c_2$. Hence by Bezout theorem, $\hat{\ccenters}$ is an ellipse.
\end{proof}

\section{Acknowledgments}

This article could not be possible without the precious help, advices and support of Alexey Glutsyuk and Olga Romaskevich, who also suggested me to work on this topic. This article is an adaptation of the proof of Olga, and many ideas come from her article, \cite{romaskevich2014}. I am also really grateful to Anastasia Kozyreva for her help.


\end{document}